\newtheorem{theorem}{Theorem}[section]
\newtheorem{corollary}[theorem]{Corollary}
\theoremstyle{definition}
\theoremstyle{remark}
\numberwithin{equation}{section}
\newcommand{\Gf}{\mathrm{der}_{\theta, b}(G, \bullet)}
\begin{document}
\title{The Haar measure of a   profinite $n$-ary group}
\author{\sc M. Shahryari}

\address{M. Shahryari\\
 Department of Mathematics,  College of Sciences, Sultan Qaboos University, Muscat, Oman}

\author{\sc M. Rostami}
\thanks{{\scriptsize
\hskip -0.4 true cm MSC(2010):  20N15
\newline Keywords: Polyadic groups; $n$-ary groups; Profinite groups and polyadic groups; Haar measure}}

\address{M. Rostami\\
Department of Pure Mathematics,  Faculty of Mathematical
Sciences, University of Tabriz, Tabriz, Iran }

\email{m.ghalehlar@squ.edu.om}
\email{m.rostami@tabrizu.ac.ir}
\date{\today}

\begin{abstract}
We prove that every profinite $n$-ary group $(G, f)=\Gf$ has a unique Haar measure $m_p$ and further for every measurable subset $A\subseteq G$, we have
$$
m_p(A)=m(A)=(n-1)m^{\ast}(A)
$$
where $m$ and $m^{\ast}$ are  the normalized Haar measures of the profinite groups $(G, \bullet)$ and the Post cover $G^{\ast}$, respectively.
\end{abstract}

\maketitle


\section{Introduction}

A polyadic group is a  natural generalization of the concept of group to the case where the binary operation of group is replaced with an $n$-ary associative operation, one variable linear equations in which have unique solutions. So, in this article, {\em polyadic group} means an $n$-ary group for a fixed natural number $n\geq 2$. These interesting algebraic objects are introduced by Kasner and D\"ornte (\cite{Dor} and \cite{Kas}) and  studied extensively by Emil Post during the first decades of the last century, \cite{Post}. During decades, many articles have been published regarding the structure of polyadic groups. In our previous work \cite{Shah-Rostam}, we studied profinite polyadic groups: $n$-ary groups which are the inverse limits of finite $n$-ary groups. We proved that a  polyadic topological group $(G, f)$ is profinite if and only if it is compact, Hausdorff, and totally disconnected. Moreover, we showed that for a profinite polyadic group $(G, f)$, its retract $(G, \bullet)$ as well as its Post cover $G^{\ast}$ are  profinite groups. As every profinite group is a compact topological group, there is a unique normalized Haar measure $m$ on $(G, \bullet)$ and another unique normalized Haar measure $m^{\ast}$ on the Post cover. In this note, we show that the polyadic group $(G, f)$ has a unique Haar measure $m_p$ and further, for every measurable subset $A\subseteq G$, we have
$$
m_p(A)=m(A)=(n-1)m^{\ast}(A).
$$
As an application, we will show that for every profinite group $G$, a continues automorphism $\theta$ preserves the Haar measure if $\theta^k$ is an inner automorphism for some $k\geq 1$.

\section{Polyadic groups}
A polyadic group is a pair $(G, f)$ where $G$ is a non-empty set and $f:G^n\to G$ is an $n$-ary operation such that \\

(i) the operation is associative, i.e.
$$
f(x_1^{i-1},f(x_i^{n+i-1}),x_{n+i}^{2n-1})=
f(x_1^{j-1},f(x_j^{n+j-1}),x_{n+j}^{2n-1})
$$
for any $1\leq i<j\leq n$ and for all $x_1,\ldots,x_{2n-1}\in G$, and \\

(ii) for all $a_1,\ldots,a_n, b\in G$ and $1\leq i\leq
n$, there exists a unique element $x\in G$ such that
$$
f(a_1^{i-1},x,a_{i+1}^n)=b.
$$

Note that here we use the compact notation $x_i^j$ for every sequence
$$
x_i, x_{i+1}, \ldots, x_j
$$
of elements in $G$, and in the special case when all terms of this sequence are equal to a fixed $x$, we denote
it by $\stackrel{(t)}{x}$, where $t$ is the number of terms.

Clearly, the case $n=2$ is exactly the definition of ordinary groups. To see various examples of polyadic groups, the reader may look at any of our previous works \cite{Dud-Shah, Khod-Shah, Khod-Shah2, Khod-Shah3, Shah2}. 
Suppose $(G, f)$ is a polyadic group and $a\in G$ is a fixed element. Define a binary operation
$$
x\bullet y=f(x,\stackrel{(n-2)}{a},y).
$$
Then $(G, \bullet)$ is an ordinary group, called the {\em retract} of $(G, f)$ over $a$. Such a retract will be denoted by $\mathrm{ret}_a(G,f)$. All
retracts of a polyadic group are isomorphic. \\

One of the most fundamental theorems on  polyadic groups is the following  {\em Hossz\'{u} -Gloskin's theorem} (see \cite{DG, Hos, Sok} for proof).

\begin{theorem}
Let $(G,f)$ be   a polyadic group. Then there exists an ordinary group $(G, \bullet)$,
an automorphism $\theta$ of $(G, \bullet)$ and  an element $b\in G$ such that\\

1.  $\theta(b)=b$,\\

2.  $\theta^{n-1}(x)=b x b^{-1}$, for every $x\in G$,\\

3.  $f(x_1^n)=x_1\theta(x_2)\theta^2(x_3)\cdots\theta^{n-1}(x_n)b$, for all $x_1,\ldots,x_n\in G$.

\end{theorem}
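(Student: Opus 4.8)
The plan is to prove the theorem constructively, producing the triple $((G,\bullet),\theta,b)$ explicitly from the $n$-ary operation and then verifying the three asserted properties. First I would fix an element $a\in G$ and take $(G,\bullet)=\mathrm{ret}_a(G,f)$ as the required binary group; its group axioms are immediate from the associativity and unique-solvability axioms of $(G,f)$, the only point needing care being the identification of the identity element. The key technical device throughout is the \emph{skew element}: for each $x\in G$ one has a unique $\bar x$ determined by $f(\stackrel{(n-1)}{x},\bar x)=x$, and D\"ornte's identities permit $\bar x$ to be inserted into an $n$-ary product in various positions as a neutral factor. Using these identities one checks that the identity of $(G,\bullet)$ is $e:=\bar a$.

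Next I would read off the candidates for $b$ and $\theta$ from the desired formula (3). Setting every argument equal to $e$ in (3) forces $b=f(\stackrel{(n)}{e})$, and isolating the second argument forces the definition
$$
\theta(x)=f(e,x,\stackrel{(n-2)}{e})\bullet b^{-1},
$$
where $b^{-1}$ denotes the inverse in $(G,\bullet)$. I would then verify that $\theta$ is an automorphism of $(G,\bullet)$: the homomorphism property $\theta(x\bullet y)=\theta(x)\bullet\theta(y)$ reduces, after expanding $\bullet$, to an instance of $n$-ary associativity together with the neutrality of $e$, while bijectivity follows from unique solvability. Property (1), namely $\theta(b)=b$, is then a short direct computation from these definitions.

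The heart of the argument, and the step I expect to be the main obstacle, is establishing (3) in full together with property (2). I would prove (3) by induction, using associativity to peel off one argument at a time: each application of the $n$-ary law replaces a block $f(\ldots)$ by a binary product in which the shift by $\theta$ appears, so that the $k$-th argument accumulates exactly $k-1$ applications of $\theta$. The relation $\theta^{n-1}(x)=bxb^{-1}$ then emerges by comparing the expression for $f$ obtained from two different associations of the same extended product, or equivalently by applying (3) to a carefully chosen tuple and simplifying. This is where the ``period $n-1$'' intrinsic to the $n$-ary structure is forced, and the bookkeeping of skew elements against repeated $\theta$-shifts is the delicate part.

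Finally, as an alternative route I would note that the whole statement can be obtained from the Post cover $G^{\ast}$: once $(G,f)$ is realized inside the group $G^{\ast}$ in the standard way, the automorphism $\theta$ arises as conjugation by a distinguished generator and $b$ as a suitable power of it, which renders properties (1)--(3) transparent. This presupposes the construction of $G^{\ast}$, however, so for a self-contained argument the direct retract computation above is preferable.
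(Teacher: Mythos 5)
This theorem is the classical Hossz\'u--Gluskin theorem, and the paper does not prove it at all: the authors simply state it and cite \cite{DG}, \cite{Hos}, \cite{Sok} for the proof, so the comparison here is against the standard argument in those references rather than against anything internal to the paper.

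Your proposal reconstructs exactly that standard constructive argument, and its key choices are correct: the retract $x\bullet y=f(x,\stackrel{(n-2)}{a},y)$ has identity $e=\bar a$ by the D\"ornte identities, and the elements you read off from property (3), namely $b=f(\stackrel{(n)}{\bar a})$ and $\theta(x)=f(e,x,\stackrel{(n-2)}{e})\bullet b^{-1}$, agree with the literature's definitions (in \cite{DG} the automorphism is written as $\theta(x)=f(\bar a,x,\stackrel{(n-2)}{a})$; the two expressions define the same map, since both are forced by (3) together with $\theta(e)=e$). Your verification scheme --- proving (3) by peeling off one argument at a time, which amounts to inserting neutral sequences $\stackrel{(n-2)}{a},\bar a$ via $n$-ary associativity, and then extracting (1) and (2) by comparing two bracketings of an extended product --- is precisely how the cited proofs proceed, so the outline is sound; the only caveat is that the D\"ornte-identity bookkeeping you defer is the entire technical content, and with incorrect candidate definitions it would collapse, so getting $e$, $b$, $\theta$ right (as you did) is the essential point. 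Your alternative route through the Post cover $G^{\ast}$ is also valid, and you correctly flag that it is not self-contained, since Post's coset construction is a heavier prerequisite than the direct retract computation.
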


According to this theorem, we  use the notation $\Gf$ for $(G,f)$
and we say that $(G,f)$ is $(\theta, b)$-derived from the group $(G,\bullet)$. It is known that the group $(G, \bullet)$ is isomorphic to any of the retracts of $(G, f)$.

There is one more important ordinary group associated to a polyadic group which we call it the {\em Post cover}. The proof of this  fundamental theorem as well as the construction of the group $G^{\ast}$ can be found in \cite{Post}.

\begin{theorem}
Let $(G, f)=\Gf$ be a polyadic group. Then there exists a unique group $(G^{\ast}, \circ)$  such that \\

1) The retract $(G, \bullet)$ is isomorphic to a normal subgroup $K$ of $G^{\ast}$.

2) $G$ is contained in $G^{\ast}$ as a coset of $K$.

3) We have $G^{\ast}/K\cong \mathbb{Z}_{n-1}$.

4) Inside $G^{\ast}$, for all $x_1, \ldots, x_n\in G$ we have $f(x_1^n)=x_1\circ x_2\circ \cdots \circ x_n$. 

5) $G^{\ast}$ is generated by $G$.
\end{theorem}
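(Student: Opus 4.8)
The plan is to construct $G^\ast$ explicitly from the Hossz\'u--Gluskin data and only afterwards address its uniqueness. Writing $(G,f)=\Gf$, I would form the semidirect product $S=(G,\bullet)\rtimes_\theta\mathbb{Z}$, in which $t=(e,1)$ acts by $txt^{-1}=\theta(x)$ for $x\in(G,\bullet)$ and $e$ is the identity of $(G,\bullet)$. Since $\theta(b)=b$ and $\theta^{n-1}(x)=bxb^{-1}$, a direct check shows that the element $c=t^{n-1}b^{-1}$ is central in $S$ and of infinite order. I then set $G^\ast=S/\langle c\rangle$, so that inside $G^\ast$ one has imposed the single relation $t^{n-1}=b$, and embed $G$ by $\iota(x)=[\,xt\,]$, i.e.\ as the coset $Kt$, where $K$ denotes the image of $(G,\bullet)$.

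Properties (1)--(5) then follow by routine computation. The $\mathbb{Z}$-coordinate distinguishes the nontrivial powers of $c$ from $(G,\bullet)$, so $\iota$ and the projection $(G,\bullet)\to K$ are injective; this yields (1), while normality of $K$ is inherited from $S$. The second-coordinate homomorphism $S\to\mathbb{Z}$ descends to a surjection $G^\ast\to\mathbb{Z}_{n-1}$ with kernel $K$, giving (2) and (3). For (4) one computes $(x_1t)\cdots(x_nt)=x_1\theta(x_2)\cdots\theta^{n-1}(x_n)\,t^n$, and, using $t^{n-1}=b$ together with the Hossz\'u--Gluskin formula $f(x_1^n)=x_1\theta(x_2)\cdots\theta^{n-1}(x_n)b$, this equals $f(x_1^n)\,t$; hence $\iota(x_1)\circ\cdots\circ\iota(x_n)=\iota(f(x_1^n))$. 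Finally $\iota(x)\iota(y)^{-1}=[\,xy^{-1}\,]$ ranges over all of $K$, so both $K$ and $t$ lie in $\langle\iota(G)\rangle$, proving (5).

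The delicate point is uniqueness, which I would obtain from a universal property: for any group $H$ and any map $\phi\colon G\to H$ with $\phi(f(x_1^n))=\phi(x_1)\cdots\phi(x_n)$, there is a unique homomorphism $\widetilde{\phi}\colon G^\ast\to H$ satisfying $\widetilde{\phi}\circ\iota=\phi$. Concretely, one defines $\psi\colon S\to H$ by $(g,0)\mapsto\phi(g)\phi(e)^{-1}$ and $t\mapsto\phi(e)$, checks from the multiplicativity of $\phi$ that $\psi$ is a homomorphism respecting the $\theta$-action, and verifies $\psi(c)=1$ so that $\psi$ factors through $G^\ast$. Granting this, if $(H,\circ')$ is any group satisfying (1)--(5) with embedding $\gamma$, then the induced map $G^\ast\to H$ is onto by (5), and comparing the coset decompositions $\bigsqcup_{j=0}^{n-2}Kt^j$ and $\bigsqcup_{j=0}^{n-2}K'\gamma(x_0)^j$ for a fixed $x_0\in G$ --- both governed by the same group $(G,\bullet)$, the same conjugation automorphism of the coset generator, and the same value of its $(n-1)$st power --- forces it to be an isomorphism. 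I expect the main obstacle to be precisely this last step: showing that the abstract axioms (1)--(5) intrinsically determine the conjugation action of the coset generator and the element playing the role of $t^{n-1}$, so that no genuinely different group can satisfy them.
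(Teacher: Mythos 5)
Your situation is unusual: the paper gives no proof of this theorem at all --- it is Post's coset theorem, quoted with the construction of $G^{\ast}$ deferred to \cite{Post}. So there is no in-paper argument to compare against; Post's original construction realizes $G^{\ast}$ via equivalence classes of sequences (words) of elements of $G$, whereas you build it from the Hossz\'u--Gluskin data $(\theta,b)$, which is legitimate here because the paper states the Hossz\'u--Gluskin theorem first. Your construction is correct: $c=t^{n-1}b^{-1}=(b^{-1},n-1)$ is central in $S=(G,\bullet)\rtimes_{\theta}\mathbb{Z}$ (this uses both $\theta(b)=b$ and $\theta^{n-1}(x)=b x b^{-1}$), it has infinite order, and your verifications of (1)--(5) in $G^{\ast}=S/\langle c\rangle$ go through. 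The universal property is also the right tool for uniqueness, and the checks you omit, while not completely trivial, do work: from $\phi(f(x_1^n))=\phi(x_1)\cdots\phi(x_n)$ one gets first $\phi(b)=\phi(e)^n$ (take all $x_i=e$), then $\phi(g\bullet h)=\phi(g)\phi(e)^{-1}\phi(h)$ (take $f(g,\stackrel{(n-2)}{e},y)=g\bullet b\bullet y$ and substitute $h=b\bullet y$), and then $\phi(\theta(h))=\phi(e)\phi(h)\phi(e)^{-1}$ (take $f(g,h,\stackrel{(n-2)}{e})=g\bullet\theta(h)\bullet b$); these three identities are exactly what is needed for your $\psi$ to be a homomorphism on $S$ that kills $c$.

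The genuine hole is the one you flagged yourself: injectivity of the induced surjection $\Phi\colon G^{\ast}\to H$. As written, the ``comparison of coset decompositions'' is a claim, not an argument, and the obstacle you anticipate --- that axioms (1)--(5) must intrinsically determine the conjugation action of the coset generator and its $(n-1)$st power --- is a red herring; you never need to recover that data. Injectivity follows in two short steps using only (2), (3), (5). First, $\ker\Phi\subseteq K$: by (2) and (5), $H$ is generated by the single coset $\gamma(G)=K'\gamma(e)$, so the image of $\gamma(e)$ generates $H/K'\cong\mathbb{Z}_{n-1}$ and has order exactly $n-1$ there; since $\Phi(K)\subseteq K'$ and $\Phi(t)=\gamma(e)$, $\Phi$ carries $Kt^{\,j}$ into $K'\gamma(e)^{\,j}$, so an element of $\ker\Phi$ lying in $Kt^{\,j}$ with $0\leq j\leq n-2$ forces $j=0$. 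Second, $\Phi$ is injective on $K$: every element of $K$ equals $\iota(g)\iota(e)^{-1}$ for a unique $g\in G$, and $\Phi\bigl(\iota(g)\iota(e)^{-1}\bigr)=\gamma(g)\gamma(e)^{-1}=1$ forces $\gamma(g)=\gamma(e)$, hence $g=e$, precisely because (2) says $G$ sits inside $H$ (i.e.\ $\gamma$ is injective). So $\ker\Phi=1$. Note that this explicit description of elements of $K$ is what rescues you from a real danger lurking in your sketch: a surjection between two groups both isomorphic to $(G,\bullet)$ need not be injective in general (non-Hopfian groups), so no abstract ``same group on both sides'' argument can work. Finally, this argument also pins down what uniqueness means and delivers it in the strong form: $G^{\ast}$ is unique up to an isomorphism fixing $G$ pointwise.
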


Now, we can go back to profinite $n$-ary groups: a profinite polyadic group is the inverse limit of an inverse system of finite polyadic groups. More precisely, let $(I, \leq)$ be a directed set and suppose $\{ (G_i,f_i), \varphi_{i j}, I\}$ is an inverse system of finite polyadic groups. This means that for every pair $(i, j)$ of elements of $I$ with $j\leq i$, we are given a polyadic homomorphism
$$
\varphi_{i j}:(G_i, f_i)\to (G_j, f_j)
$$
such that the equality $\varphi_{j k}\varphi_{i j}=\varphi_{i k}$ holds for all $k\leq j\leq i$. Now, assume that
$$
(G, f)=\varprojlim_i (G_i, f_i).
$$
Then $(G, f)$ is called a profinite $n$-ary group.  From now on, we consider the pair $(G, f)$ which is the above mentioned inverse limit. 
In \cite{Shah-Rostam} we obtained the following results: \\
 
1) The profinite polyadic group $(G, f)=\Gf$ is compact, Hausdorff, and totally disconnected topological polyadic group. \\

2) The corresponding retract $(G, \bullet)$ and the Post cover $G^{\ast}$ are profinite. \\

3) More generally,  $(G, f)=\Gf$ is profinite if and only if $(G, \bullet)$ is profinite and the automorphism $\theta$ is continues. \\

\section{Haar measure}
We consider a profinite $n$-ary group $(G, f)=\Gf$ with the corresponding Post cover $G^{\ast}$. The definition of a Haar measure on $(G, f)$ is the same as the ordinary topological groups (see \cite{Jarden}) where the invariance is replaced with the following condition:\\

{\em For any measurable subset $A\subseteq G$, any sequence $x_1^n$, and any index $i$, the sets $A$ and $f(x_1^{i-1}, A, x_{i+1}^n)$ have the same measure.}\\

Note that, as we will see soon, we don't need to consider any weaker condition like being $i$-invariant (recall that for compact groups the right and left Haar measures coincide). In what follows we always assume that the measures are normalized. As the ordinary groups $(G, \bullet)$ and $G^{\ast}$ are profinite, they have unique Haar measures $m$ and $m^{\ast}$ respectively. 

\begin{theorem}
The polyadic group $(G, f)$ has a unique Haar measure $m_p$ and further, for every measurable $A\subseteq G$ we have 
$$
m_p(A)=m(A)=(n-1)m^{\ast}(A).
$$
\end{theorem}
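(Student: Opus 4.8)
The plan is to prove the three-way equality and the uniqueness simultaneously by playing the two ordinary groups attached to $(G,f)$ against each other: the retract $(G,\bullet)$ supplies \emph{uniqueness}, while the Post cover $G^{\ast}$ supplies \emph{existence} together with the constant $n-1$. In both cases the mechanism is that the $n$-ary invariance of an $n$-ary Haar measure degenerates, in two different ways, to ordinary translation invariance. I would first record the measure-theoretic fact used throughout: on a compact (hence unimodular) group the normalized Haar measure is the \emph{unique} left-invariant regular Borel probability measure, and it is automatically right-invariant. Both $(G,\bullet)$ and $G^{\ast}$ are profinite by the results quoted above, so this applies to $m$ and to $m^{\ast}$; moreover $G$ sits in $G^{\ast}$ as a coset of the closed finite-index (hence clopen) normal subgroup $K\cong(G,\bullet)$, so the Borel structure on $G$ coming from $(G,f)$, from $(G,\bullet)$, and from the subspace topology of $G^{\ast}$ all coincide, and restriction of $m^{\ast}$ to subsets of $G$ makes sense.

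For existence I would propose the formula and check it. Define $m_p(A)=(n-1)\,m^{\ast}(A)$ for measurable $A\subseteq G$. Since $G^{\ast}/K\cong\mathbb{Z}_{n-1}$, the group $G^{\ast}$ is the disjoint union of the $n-1$ cosets of $K$, each of $m^{\ast}$-measure $m^{\ast}(K)$ by translation invariance; summing to $1$ gives $m^{\ast}(G)=\tfrac{1}{n-1}$, so $m_p(G)=1$ and $m_p$ is a normalized regular Borel measure. For invariance, clause (4) of the Post theorem gives $f(x_1^{i-1},A,x_{i+1}^n)=u\circ A\circ v$ inside $G^{\ast}$, where $u=x_1\circ\cdots\circ x_{i-1}$ and $v=x_{i+1}\circ\cdots\circ x_n$ are fixed elements of $G^{\ast}$; bi-invariance of $m^{\ast}$ then yields $m^{\ast}(u\circ A\circ v)=m^{\ast}(A)$, hence $m_p(f(x_1^{i-1},A,x_{i+1}^n))=m_p(A)$ for every index $i$ and every sequence $x_1^n$. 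Thus $m_p$ is a Haar measure on $(G,f)$.

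For uniqueness, let $\mu$ be any normalized Haar measure on $(G,f)$. Taking the index $i=n$ and specializing $x_2=\cdots=x_{n-1}=a$ (the element defining the retract), the set $f(x,\stackrel{(n-2)}{a},A)$ is exactly the left translate $x\bullet A$, so the invariance of $\mu$ reads $\mu(x\bullet A)=\mu(A)$ for all $x$. Hence $\mu$ is a normalized left-invariant regular Borel measure on the compact group $(G,\bullet)$, and by uniqueness of Haar measure there $\mu=m$. Thus every Haar measure on $(G,f)$ equals $m$, which proves uniqueness; in particular the explicit measure $m_p$ of the previous step satisfies $m_p=m$, that is $m(A)=(n-1)m^{\ast}(A)$, and the full chain $m_p(A)=m(A)=(n-1)m^{\ast}(A)$ follows.

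No single step is genuinely hard; the work is in setting up the bookkeeping, and the point most worth care is the compatibility of the three measurable structures on $G$ together with the normalization constant $[G^{\ast}:K]=n-1$. It is also here that one sees why, as the remark anticipates, no weaker one-sided notion of invariance is needed: on the compact groups $(G,\bullet)$ and $G^{\ast}$ both left and right translations already preserve Haar measure, and continuous automorphisms do as well (if $\nu(A)=m(\theta(A))$ then $\nu$ is left-invariant, whence $\nu=m$), which is precisely the fact feeding the automorphism application announced in the introduction. The only remaining thing to verify is that the coset $G$ and the translates $u\circ A\circ v$ stay within the same Borel structure, and this is immediate because $K$ is clopen.
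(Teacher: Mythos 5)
Your proof is correct and takes essentially the same approach as the paper's: uniqueness by specializing the $n$-ary invariance to the retract operation $x\bullet y=f(x,\stackrel{(n-2)}{a},y)$ and invoking uniqueness of Haar measure on the compact group $(G,\bullet)$, and existence by restricting $m^{\ast}$ to $G$ and scaling by $[G^{\ast}:K]=n-1$, verified via clause (4) of the Post cover theorem and bi-invariance of $m^{\ast}$. Your clopen-ness argument for compatibility of the Borel structures is a slightly slicker substitute for the paper's case-by-case induction on measurable sets, but the substance of the two arguments is identical.
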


\begin{proof}
First, we show the uniqueness. Let $m^{\prime}$ be a Haar measure for $(G, f)$. We show that it is also a Haar measure of $(G, \bullet)$. Recall that $(G, \bullet)\cong \mathrm{ret}_a(G, f)$ for any arbitrary $a\in G$. Suppose $A\subseteq G$ is a measurable set and $x\in G$ is an arbitrary element. Then we have
\begin{eqnarray*}
m^{\prime}(x\bullet A)&=&m^{\prime}(f(x, \stackrel{(n-2)}{a}, A))\\
                      &=&m^{\prime}(A).
\end{eqnarray*}
This shows that $m^{\prime}$ is a Haar measure of $(G, \bullet)$ and by the uniqueness of Haar measure for compact groups, we see that $(G, f)$ must have at most one Haar measure.  

Now, consider a measurable subset $A\subseteq G$ and the Haar measure $m$. For every index $i$, by the Hossz\'{u}-Gloskin's theorem, we  have 
\begin{eqnarray*}
m(f(x_1^{i-1}, A, x_{i+1}^n))&=&m(x_1\bullet \theta(x_2)\bullet \cdots\bullet\theta^{i-2}(x_{i-1})\bullet\\
                             &\ &\ \ \ \ \ \ \theta^{i-1}(A)\bullet\theta^i(x_{i+1})\bullet\cdots\bullet\theta^{n-1}(x_n)\bullet b)\\
                             &=&m(\theta^{i-1}(A)).
\end{eqnarray*}
As a result $m$ is a Haar measure of $(G, f)$ if and only if the automorphism $\theta$ preserves $m$, i.e. $m(\theta(A))=m(A)$ for every measurable $A$. Soon, we will see that this condition is always satisfied. Now, consider the Haar measure $m^{\ast}$. We claim that every measurable subset $A\subseteq G$ is also measurable in $G^{\ast}$. We consider the following cases.\\

1) If $A=G$ then $A$ is measurable in $G^{\ast}$ as $G$ is a coset of retract $(G, \bullet)$ in $G^{\ast}$. Recall that the retract has finite index in $G^{\ast}$. \\

2) If $A$ is open in $G$ then $A=B\cap G$ for some open set $B$ in $G^{\ast}$, so $A$ is measurable in $G^{\ast}$.\\

3) If $A=\bigcup_{i=1}^{\infty}A_i$ and each $A_i$ is measurable in $G$, then by induction each $A_i$ is measurable in $G^{\ast}$ and so is $A$.\\

4) The case $A=\bigcap_{i=1}^{\infty}A_i$ is similar.\\

5) If $A=G\setminus B$ for some measurable $B\subseteq G$, then $B$ is measurable in $G^{\ast}$ by induction and as $A=(G^{\ast}\setminus B)\cap G$, it is measurable in $G^{\ast}$.\\

Note that the index of the retract $K=(G, \bullet)$ in $G^{\ast}$ is $n-1$, and hence, we have 
$$
m^{\ast}(K)=\frac{1}{n-1}=m^{\ast}(G). 
$$
Now, for every measurable set $A$ in $G$, define 
$$
m_p(A)=(n-1)m^{\ast}(A). 
$$
This $m_p$ is a indeed a Haar measure on $(G, f)$ as for any sequence $x_1^n$, any measurable subset $A$, and any index $i$, we have
\begin{eqnarray*}
m_p(f(x_1^{i-1}, A, x_{i+1}^n))&=&(n-1)m^{\ast}(f(x_1^{i-1}, A, x_{i+1}^n))\\
                             &=&(n-1)m^{\ast}(x_1\circ\cdots\circ x_{i-1}\circ A\circ x_{i+1}\circ \cdots\circ x_n)\\
                             &=&(n-1)m^{\ast}(A)\\
                             &=&m_p(A).
\end{eqnarray*}
To check the regularity of $m_p$, suppose again $A\subseteq G$ is a measurable set and $\varepsilon$ is a positive number. Since $A$ is measurable in $G^{\ast}$, there is a closed set $B$ and an open set $U$ in $G^{\ast}$ such that $B\subseteq A\subseteq U$ and $m^{\ast}(U\setminus B)\leq \varepsilon/(n-1)$. Note that $B$ is closed in $G$ as well: $G\setminus B=G\cap(G^{\ast}\setminus B)$ which is open. Let $U_0=G\cap U$. Now we have 
$B\subseteq A\subseteq U_0$ and 
$$
m_p(U_0\setminus B)=(n-1)m^{\ast}(U_0\setminus B)\leq (n-1)m^{\ast}(U\setminus B)\leq \varepsilon
$$
which shows that $m_p$ is regular. 
As a result $(G, f)$ has always a unique Haar measure $m_p$ and consequently we have 
$$
m_p(A)=m(A)=(n-1)m^{\ast}(A).
$$
Beside this, we see that the automorphism $\theta$ preserves the measure $m$. 
\end{proof}

As an application, we may have the following result for profinite groups. 

\begin{corollary}
Let $G$ be a profinite group with Haar measure $m$. Suppose $\theta$ is a continues automorphism of $G$ and there exist some integer $n$ and some element $b$, such that for all $x$ we have $\theta^{n-1}(x)=bxb^{-1}$. Then $\theta$ preserves the measure $m$. 
\end{corollary}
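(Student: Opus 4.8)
The plan is to realize the data $(G,\bullet,\theta,b)$ as a profinite $n$-ary group and then read off the conclusion from the Theorem, whose proof already records that $\theta$ preserves the Haar measure $m$. The converse direction of the Hossz\'u--Gloskin theorem says that whenever a group $(G,\bullet)$, an automorphism $\theta$, and an element $b$ satisfy $\theta(b)=b$ together with $\theta^{n-1}(x)=bxb^{-1}$, the formula $f(x_1^n)=x_1\theta(x_2)\cdots\theta^{n-1}(x_n)b$ turns $G$ into an $n$-ary group $(G,f)=\Gf$. Granting this, profiniteness of $G$ and continuity of $\theta$ give, by the third of the facts recalled from \cite{Shah-Rostam}, that $(G,f)$ is a profinite $n$-ary group; the Theorem then applies, and its closing line is exactly the assertion we want, namely that $\theta$ preserves $m$. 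So the steps in order are: first verify the hypotheses of the converse Hossz\'u--Gloskin construction; second form $(G,f)=\Gf$ and invoke profiniteness; third apply the Theorem and extract $m(\theta(A))=m(A)$ for every measurable $A$. The second and third steps are immediate once the first is settled.

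The main obstacle is precisely the first step: the hypothesis supplies $\theta^{n-1}(x)=bxb^{-1}$ but not the normalization $\theta(b)=b$ that the construction of $f$ requires, and a direct associativity check (already for $n=3$) shows the operation genuinely fails to be associative without $\theta(b)=b$. I would first record the structural consequences of the hypothesis: since $\theta$ commutes with $\theta^{n-1}=\iota_b$, we get $\iota_{\theta(b)}=\theta\,\iota_b\,\theta^{-1}=\iota_b$, so $z:=b^{-1}\theta(b)$ lies in the center $Z(G)$; moreover $\theta^{n-1}$ fixes $Z(G)$ pointwise, so $\theta|_{Z(G)}$ has finite order dividing $n-1$, and tracking $\theta^{k}(b)=b\prod_{i=0}^{k-1}\theta^{i}(z)$ up to $k=n-1$ yields the norm relation $\prod_{i=0}^{n-2}\theta^{i}(z)=1$.

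The aim is then to replace $b$ by $b'=bc$ with $c\in Z(G)$, which leaves the conjugation relation intact ($\iota_{bc}=\iota_b$) while forcing $\theta(b')=b'$; unwinding $\theta(bc)=bc$ shows this is equivalent to solving $\theta(c)c^{-1}=z^{-1}$ in $Z(G)$. This is a Hilbert~90 type problem for the finite-order action of $\theta$ on the profinite abelian group $Z(G)$: we know $z^{-1}$ lies in the kernel of the relevant norm and we want it in the image of $c\mapsto\theta(c)c^{-1}$. I expect this cohomological vanishing to be the genuinely delicate point, and where it holds the corollary drops out of the Theorem exactly as planned.

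Should that adjustment be unavailable, I would fall back on the remark that for \emph{any} continuous automorphism of a compact group the push-forward $\theta_{\ast}m$ is again a left-invariant normalized regular Borel measure, so $\theta_{\ast}m=m$ by uniqueness of the Haar measure, giving $m(\theta(A))=m(A)$ directly. This route ignores the inner hypothesis altogether, but it confirms that the corollary is correct even in the cases where the polyadic construction meets the obstruction above, and it is the argument I would ultimately trust for completeness.
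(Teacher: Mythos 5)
Your proposal is correct, and the comparison with the paper is instructive: your ``primary route'' is exactly the paper's own proof, and the gap you flag is genuinely present in the paper. The paper's entire argument is the sentence ``consider the $n$-ary group $(G,f)=\mathrm{der}_{\theta,b}(G,\cdot)$,'' but under the corollary's hypotheses this derived operation need not be associative: with $\theta^{n-1}=\iota_b$ (where $\iota_b$ denotes conjugation by $b$), comparing $f(f(x_1^n),x_{n+1}^{2n-1})$ with $f(x_1,f(x_2^{n+1}),x_{n+2}^{2n-1})$ reduces, after cancellation, to the identity $b\,\theta(x)=\theta^{n}(x)\,\theta(b)$ for all $x$, which at $x=1$ forces $\theta(b)=b$. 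So the normalization $\theta(b)=b$ is not a dispensable convention of Hossz\'u--Gluskin but a necessary condition, and it is not among the corollary's hypotheses; your observation that the paper's construction is unjustified as written is a real finding. Your fallback argument --- the push-forward $\theta_{\ast}m$ is a normalized, regular, translation-invariant Borel measure, hence equals $m$ by uniqueness of the Haar measure on a compact group --- is complete and correct, proves a stronger statement in which the inner-power hypothesis plays no role, and is a genuinely different, more elementary route than the paper's; it is the argument to keep.

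It is worth adding that the polyadic route can be repaired without resolving the cohomological obstruction you describe, by enlarging the arity rather than fixing $n$. As you note, $z=b^{-1}\theta(b)$ is central and $\theta^{k}(b)=b\prod_{i=0}^{k-1}\theta^{i}(z)$, so $\theta^{n-1}(b)=b$ yields the norm relation $\prod_{i=0}^{n-2}\theta^{i}(z)=1$. Now $D=\{\theta(c)c^{-1}: c\in Z(G)\}$ is a subgroup of $Z(G)$ and each $\theta^{i}(z)z^{-1}$ lies in $D$ (take $c=\prod_{j=0}^{i-1}\theta^{j}(z)$); hence the norm relation gives $z^{n-1}\in D$, say $z^{n-1}=\theta(c_0)c_0^{-1}$. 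Setting $B=b^{n-1}c_0^{-1}$ and $N=(n-1)^2+1$, one checks $\theta(B)=b^{n-1}z^{n-1}\theta(c_0)^{-1}=B$ and $\iota_B=\iota_{b^{n-1}}=(\theta^{n-1})^{n-1}=\theta^{N-1}$, so $\mathrm{der}_{\theta,B}(G,\bullet)$ is a genuine $N$-ary group, profinite because $(G,\bullet)$ is profinite and $\theta$ is continuous, and the Theorem applied to it gives $m(\theta(A))=m(A)$. In other words, the class of $z$ in $Z(G)/D$ is killed by $n-1$, and raising the arity to $(n-1)^2+1$ exploits exactly this, so the Hilbert~90 type vanishing you were worried about never has to hold for the given $n$.
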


\begin{proof}
Consider the $n$-ary group $(G, f)=\mathrm{der}_{\theta, b}(G, \cdot)$. As $\theta$ is continues, this polyadic group is profinite and hence by the above theorem, $\theta$ preserves the Haar measure $m$.
\end{proof}

{\bf Conflict of interest statement}. There is no conflict of interest.

\end{document}